\newtheorem{lemma}{Lemma}[section]
\newtheorem{theorem}[lemma]{Theorem}
\newtheorem{remark}[lemma]{Remark}
\newtheorem{example}[lemma]{Example}
\newtheorem{definition}[lemma]{Definition}
\newcommand{\di}{{\rm d}}
\newcommand{\hos}{{\rm ho}}
\newcommand{\tor}{{\rm to}}
\newcommand{\dom}{{\rm Dom}}
\newcommand{\vertiii}[1]{{\left\vert\kern-0.25ex\left\vert\kern-0.25ex\left\vert #1 
    \right\vert\kern-0.25ex\right\vert\kern-0.25ex\right\vert}}
\title{Lower bounds on the growth of  Sobolev norms in some linear time dependent Schr\"odinger equations}
\author{
A. Maspero\footnote{ International School for Advanced Studies (SISSA), Via Bonomea 265, 34136, Trieste, Italy \newline
 \textit{Email: } \texttt{alberto.maspero@sissa.it}}
}
\begin{document} 
\maketitle{} 

\begin{abstract}
In this paper we consider linear, time dependent Schr\"odinger equations of the form 
$\im \partial_t  \psi = K_0 \psi + V(t) \psi $, where $K_0$ is a positive self-adjoint operator with discrete spectrum and
whose  spectral gaps are asymptotically constant. \\
 We  give a strategy to construct bounded perturbations $V(t)$ such that the Hamiltonian $K_0 + V(t)$ generates   unbounded orbits.
We apply our abstract construction to three cases: (i) the  Harmonic oscillator on $\R$, (ii) the half-wave equation on $\T$ and (iii) the Dirac-Schr\"odinger equation on Zoll manifolds.
In each case, $V(t)$ is a  smooth and periodic in time pseudodifferential operator and the Schr\"odinger equation  has solutions fulfilling 
$\norm{\psi(t)}_r \gtrsim |t|^{r }$ as $|t| \gg 1$.
\end{abstract} 

\section{Introduction}
In this paper we study linear Schr\"odinger equations of the form 
\begin{equation}
\label{LS}
\im \partial_t  \psi = K_0 \psi + V(t) \psi 
\end{equation}
on a scale of Hilbert spaces $\cH^r$. 
Here   $K_0$ is a positive,  selfadjoint operator with purely discrete spectrum,  $V(t)$ is a time dependent self-adjoint perturbation, and the scale  $\cH^r$  is the one  defined spectrally by  $K_0$.

%, i.e. $\cH^r \equiv \dom(K_0^r)$ endowed with the norm y $\norm{\psi}_r := \norm{K_0^r \psi}_{\cH^0}$.
We develop an abstract technique to construct  smooth and periodic in time, bounded operators $V(t)$ for which  \eqref{LS} has unbounded orbits with norms growing polynomially in time, 
despite   every  orbit of the  unperturbed flow  being   bounded. 
%
%
%Equation \eqref{LS} is studied in t
%With this definition,  each norm $\norm{\cdot}_r$ is preserved by   the unperturbed flow (i.e.  the flow generated by $K_0$).
%A natural question is therefore what happens to the dynamics if the perturbation $V(t)$ is turned on; a particularly important question is whether the  norms of the solution stay bounded  or grow in  time. 
%
%Here we focus our attention on  constructing perturbations $V(t)$ which are bounded, smooth in time, and produce   growth of  norms, 
%in the sense that the dynamics generated by $K_0 + V(t)$ has (at least) a solution $\psi(t)$  for which 
%\begin{equation}
%\label{gr}
%\exists \ \mu, C_r  >0 \colon \quad 
%\norm{\psi(t)}_r \geq C_r \, t^{\mu r}  \ , \qquad \mbox{ for } t \gg 1 \ .
%\end{equation}
%%We do not  want   $V(t)$ to be a  too  rough perturbation, so we ask as  minimal requirement that it is a pseudodifferential operator depending smoothly in time. 

Our main result (Theorem \ref{thm:main}) is to develop a  general procedure to construct such perturbations 
{\em in case  $K_0$  has constant spectral gaps}.\\
%In this case, the   perturbations  $V(t)$ are  periodic in time with frequency resonant with the spectral gap.	\\
 We successfully apply this strategy   to three different models: (i) the  Harmonic oscillator on $\R$, (ii) the half-wave equation on $\T$ and (iii) the Dirac-Schr\"odinger equation on Zoll manifolds.
In each case, we construct $V(t)$ as a pseudodifferential operator of order 0,  smooth and $2\pi$-periodic  in time,  so that the Hamiltonian  $K_0 + V(t)$ has solutions $\psi(t)$ fulfilling 
\begin{equation}
\label{gr}
\norm{\psi(t)}_r \geq C_r \, t^{ r}  \ , \qquad \mbox{ for } t \gg 1 \ ,
\end{equation}
which display,  therefore, growth of Sobolev norms.\\
Note that case (iii) differs from (i) and (ii), since  the Dirac-Schr\"odinger operator on Zoll manifolds has only {\em asymptotically constant spectral gaps};   however, such  operator is  a smoothing perturbation of  a $K_0$   with  constant spectral gaps, and our method applies with just a minor modification. 
In particular, the difference between cases (i)-(ii) and (iii) is that in the former ones the perturbations $V(t)$ can be arbitrary small in size; on the contrary, in case (iii)  $V(t)$ has to  contain a not perturbative term to correct the  spectral gaps.

The problem of constructing unbounded solutions in Schr\"odinger equations has recently attracted a lot of attention. However, even  in 
the simpler case of {\em linear time dependent } equations there are not   many results in the literature.
Up to our knowledge, the only  examples 
were given by Bourgain for a Klein-Gordon and Schr\"odinger equation on $\T$ \cite{bou99}, by Delort for the Harmonic oscillator on $\R$ \cite{del}, and by Bambusi,  Gr\'ebert, Robert and the author for the Harmonic oscillators on $\R^d$, $d \geq 1$ \cite{BGMR1}.
While the techniques of \cite{bou99, del} are quite involved, 
the construction of  \cite{BGMR1} is simpler and based on 
a result by Graffi and Yajima \cite{GY00} 
to prove  stability of the absolutely  continuous spectrum 
of a certain Floquet operator.\\
It turns out that  the mechanism of Graffi and Yajima is quite general, and the main idea of this paper is that their  procedure can be  ``reversed", in an abstract setting, 
to create perturbations which provoke growth of norms.
% 
%
%Our construction is actually very simple, and it is based on  ``reversing" a   mechanism exploited by  \cite{GY00} 
%to prove  stability of the absolute continuous spectral part  
%of a certain Floquet operator.
It is therefore  worth to spend few words on the  result by  Graffi and Yajima. \cite{GY00} considers the following periodically forced Harmonic oscillator 
\begin{equation}
\label{eq:gy}
\im \dot \psi = \frac{1}{2}\left(-\partial_{xx} + x^2\right) \psi + a\,  x(\sin t)\psi  \ , \qquad x \in \R \ , \ a \in \R\setminus\{ 0 \}  \ , 
\end{equation} as an example of
Hamiltonian whose Floquet spectrum is absolutely continuous, despite the fact that the unperturbed
Hamiltonian  has discrete spectrum. 
This statement is proved  by conjugating \eqref{eq:gy} 
by the flow of the unperturbed Hamiltonian and then by a  Galilean transform; 
the conjugated Hamiltonian is simply $\frac{a}{2\im}\partial_x$, 
whose Floquet spectrum is  absolutely continuous. 
In  \cite{BGMR1}, this scheme  is  exploited to prove growth of  norms  for \eqref{eq:gy}, by simply remarking that 
 $\frac{a}{2\im}\partial_x$ generates orbits unbounded w.r.t. the norms defined spectrally by the harmonic oscillator (actually  \cite{BGMR1} considers also perturbations quasi-periodic in time).

Now, the main idea of this paper is that it is possible to ``reverse'' the mechanism of \cite{GY00} to construct time dependent equations with unbounded orbits.
The construction is the following: assume to have  a {\em time independent} pseudodifferential operator $A$ whose  propagator generates unbounded orbits, and define the time dependent perturbation 
\begin{equation}
\label{def:VA}
V_A(t) := e^{-\im t K_0 } \, A \, e^{\im t K_0}  
\end{equation}
(i.e. conjugate $A$ by the inverse flow of the unperturbed operator).
By construction, the dynamics of $K_0 + V_A(t)$
is unitarily equivalent to the dynamics of $A$, so
it has unbounded orbits as well. Provided the map $t \mapsto V_A(t)$ is smooth and periodic in time, we have found our perturbation; it is here that we use (indirectly) that $K_0$ has constant spectral gaps, see Remark \ref{rem:K0.csg}.
%It turns out that $V_A(t)$ is  smooth and $2\pi$-periodic in time pseudodifferential operator,  provided we impose some conditions on $K_0$.
%we  impose that the flow generated by  $K_0$ is periodic (this in turn follows from a spectral condition on $K_0$) and  that  it satisfies a Egorov-type theorem.

Therefore, all is left to do in applications is to find a  time independent  operator whose  propagator generates  unbounded orbits.
This is a much simpler task, 
and  the general philosophy is to look for  operators with nontrivial absolutely continuous spectrum; indeed,  for these operators,  the   Guarneri-Combes theorem \cite{guarneri1, combes} guarantees the existence of initial data $\psi$ for which the time-averaged Sobolev norms   $\frac{1}{T}\int_0^T\norm{e^{-\im t A}\psi}_r \, \di t$ grow  in $T$. 
This is a slightly weaker statement than \eqref{gr}; however,  in  applications, one can typically prove the   stronger estimate    $\norm{e^{-\im t A}\psi}_r \geq C_{r,\psi} \,  t^r$ as $t \to \infty$.
In particular we succeed in doing this for the Harmonic oscillator on $\R$, giving an alternative, shorter proof of Delort's result  \cite{del} (see Theorem \ref{thm:hos}).\\

As a further comment, it is interesting to compare the rate of growth \eqref{gr} with the upper bounds proved in \cite{maro, BGMR2} for equations of the form \eqref{LS}.
In particular, the results of \cite{maro} imply that  for any  $V(t)$
continuous in time (but otherwise arbitrary depending)   and  pseudodifferential of order $\rho \leq 1$, each solution of \eqref{LS} fulfills
\begin{equation}
\label{ub1}
\norm{\psi(t)}_r \leq C_{r, \rho} \la t \ra^{\frac{r}{1 - \rho} }\ , \qquad \forall t \in \R \ . 
\end{equation}
Therefore, the solutions  we construct here saturate   \eqref{ub1}, at least for   $\rho =0$. 

The upper bound \eqref{ub1} can be improved  adding the assumption that  $V(t)\equiv V(\omega t)$ is quasiperiodic in time with a nonresonant frequency vector $\omega \in \R^n$. Indeed, in \cite{BGMR2} it is proved that  if $\omega$ fulfills the nonresonance condition 
\begin{equation}
\label{nonres}
\exists \gamma, \tau >0 \ \ \mbox{s.t. } \ \ \abs{  \ell + \omega \cdot k } \geq \frac{\gamma}{(1+ |k| + |\ell|)^\tau} \ , \qquad \forall \ell \in \Z\setminus\{0\} , \ \forall k \in \Z^n\setminus\{0\} \ , 
\end{equation}
%for some positive $\gamma, \tau$, 
then for any $r \geq 0$, any $\epsilon >0$ arbitrary small,  \eqref{ub1} improves to 
\begin{equation}
\label{ub2}
\norm{ \psi(t)}_r \leq  C_{r,\epsilon} \la t \ra^\epsilon \ , \qquad \forall t \in \R \ ;
\end{equation}
the last estimate means that the growth of norms, if happens, is subpolynomial in time.
Note that \eqref{ub2}  is not in contrast with the faster  growth of the norms \eqref{gr} ; indeed  the spectral condition that we impose on $K_0$ 
(see \eqref{AssA}) implies that $V_A(t)$ defined in \eqref{def:VA} is periodic in time with frequency $\omega = 1$, 
which is  clearly resonant.

Finally, in some cases one can prove that the Sobolev norms 
of the solution stay uniformly bounded in time. 
This requires typically 
nonresonance conditions stronger than  \eqref{nonres} and a smallness assumption on the size of the perturbation. 
In this case,   one might try  to prove a reducibility KAM theorem, conjugating
$K_0 + V(t)$
 to a new Hamiltonian which is time independent and commutes  with $K_0$; as a consequence, one gets the upper bound 
\begin{equation}
\label{up3}
\sup_{t \in \R} \norm{\psi(t)}_r \leq C_r \ .
\end{equation}
Concerning the systems that we treat here, the reducibility scheme has been successfully 
implemented for the Harmonic oscillators on $\R^d$ \cite{C87, W08, GT11, GP,  Bam16I, BGMR1},   wave equations on the torus  \cite{CY00, fang14, mont17} (these methods can be used to prove  reducibility for the half-wave equation on $\T$), 
and Klein-Gordon equation on the sphere \cite{GP2}.
In all cases the frequency $\omega$ must be chosen in a Cantor set of nonresonant vectors and the perturbation must be sufficiently small in size. 
We recall that  also the perturbations constructed here (and which provoke growth of norms) can be arbitrary small in size; 
therefore the stability/instability of the system depends only on the resonance property of the frequency $\omega$.

Before closing this introduction, let us mention that, in case of {\em nonlinear} Schr\"odinger equations,  the problem of constructing solutions with unbounded orbits  is extremely difficult. 
A first breakthrough was achieved in \cite{CKSTT}, which constructs solutions of the cubic nonlinear Schr\"odinger equation on $\T^2$ whose Sobolev norms become arbitrary large 
(see also \cite{hani14, guardia14, haus_procesi15, guardia_haus_procesi16}
 for generalizations of this result);
  however  the existence of unbounded orbits for this model is still an open problem.\\
At the moment, existence of unbounded  orbits  has only been proved by G\'erard and Grellier \cite{gerard_grellier} for the cubic 
Szeg\H o equation on $\T$, and by 
Hani, Pausader, Tzvetkov and 
              Visciglia \cite{hani15} for the cubic NLS  on $\R \times \T^2$.

\vspace{1em}
\noindent{\bf Acknowledgments.}
We wish to thank Dario Bambusi, Didier Robert and Beno\^it Gr\'ebert  for many useful discussions and suggestions during the preparation of the manuscript.
Currently we are partially supported by PRIN 2015 ``Variational methods, with applications to problems in mathematical physics and geometry".

\section{An abstract theorem of growth}
\label{sec:alg}
We start with a Hilbert space $(\cH, \la \cdot , \cdot \ra)$ and a reference operator $K_0$,
which we assume to be selfadjoint and positive, namely such that
$$
\langle \psi; K_0\psi\rangle\geq c_{K_0} \norm{\psi}_0^2\ ,\quad \forall
\psi\in \dom(K_0^{1/2})\ ,\quad c_{K_0} >0\ , 
$$ and define as usual a scale of Hilbert spaces by $\cH^r=\dom(K_0^r)$
(the domain of the operator $K_0^r$) if $r\geq 0$, and
$\cH^{r}=(\cH^{-r})^\prime$ (the dual space) if $ r<0$.
 We endow $\cH^r$ with the norm
$$
\norm{\psi}_r:= \norm{K_0^r \psi}_{0} \ , 
$$ 
where $\norm{\cdot}_0$ is
the norm of $\cH^0 \equiv \cH$.  Finally we
denote by $\cH^{-\infty} = \bigcup_{r\in\R}\cH^r$ and $\cH^{+\infty} =
\bigcap_{r\in\R}\cH^r$.   Notice that 
$\cH^{+\infty}$ is a dense linear subspace of $\cH^r$ for any $r\in\R$ (this is a
consequence of the spectral decomposition of $K_0$).

\begin{remark}
By the very definition of $\cH^r$, the unperturbed flow $e^{-\im t K_0}$ preserves each norm, $\norm{e^{-\im t K_0} \psi }_r = \norm{\psi}_r$ $\, \forall t \in \R$. 
Consequently, every orbit of the unperturbed equation is bounded.
\end{remark}

For $\cX, \cY$ Banach spaces,  we denote by $\cL(\cX, \cY)$ the set of bounded operators from $\cX$ to $\cY$; if $\cX =\cY$, we simply write  $\cL(\cX)$.

We state now the assumptions that we need in order to prove our result. The first one  is a spectral property of $K_0$:\\

 \noindent{{\bf Assumption A:}}  $K_0$ has an entire discrete spectrum such that  
 \begin{equation}
 \label{AssA}
 {\rm spec}(K_0) \subseteq \N +\lambda
 \end{equation}   for some  $\lambda \geq 0$.\\

\begin{remark}
\label{rem:flow.per}
Assumption {\rm A} guarantees that
$e^{\im 2\pi K_0} = e^{\im 2 \pi \lambda}$.
In particular, for any operator $V$, the map  $t \mapsto e^{\im t K_0} V e^{-\im t K_0}$ is $2\pi$-periodic.
\end{remark}
The next assumption regards the existence of a bounded, {\em  time independent} operator with unbounded orbits:\\

\noindent{{\bf Assumption B:}} There exists an operator $A \in \cL(\cH^r)$,  an initial datum $\psi_0 \in \cH^r$ and a real $\mu >0$ such that  the  Schr\"odinger equation
\begin{equation}
\label{eq:A}
\im \dot \psi = A \psi \ , \qquad \ \psi(0) = \psi_0
\end{equation}
 has a solution $\psi(t) \in \cH^r$ fulfilling
 \begin{equation}
 \label{growth.os}
 \norm{\psi(t)}_r \geq C_r \, t^{\mu r} \ , \qquad t \gg 1 \ .
 \end{equation}

Despite  Assumption B might seem very strong,  in applications it can be verified quite easily. 
In Lemma 	\ref{lem:ad.0} below we  give some sufficient conditions on the operator $A$ to obtain  \eqref{growth.os}.

The last assumption concerns smoothness in time of the map
$t\mapsto {\rm e}^{-\im t K_0}\, A \, {\rm e}^{\im t K_0}$:\\
%
%\begin{remark}
%If $V \in C^{\infty}\left(\T ; \cL(\cH^r)\right)$ then  
%$\forall \ell \geq 0$ there exists $R_\ell < \infty$ such that
%         $$
%        \sup_{t \in \T} \norm{ \partial_t^\ell  V(t) }_{\cL(\cH^r)} \leq R_\ell \ , \qquad \forall \ell \in \N  \ . 
%         $$  
%\end{remark}

\noindent{{\bf Assumption C:}} The map 
         $t\mapsto {\rm e}^{-\im t K_0}\, A \, {\rm e}^{\im t K_0}\in
         C^{\infty}\left(\R ; \cL(\cH^r)\right)$.\\
 
 Here     $C^{\infty}\left(\R ; \cL(\cH^r)\right)$ is the class of smooth maps from $\R$ to $\cL(\cH^r)$.

\begin{remark}
\label{rem:flow2}
By  Remark \ref{rem:flow.per},  $t\mapsto {\rm e}^{-\im t K_0}\, A \, {\rm e}^{\im t K_0}\in
         C^{\infty}\left(\T ; \cL(\cH^r)\right)$, namely the map and all its derivatives are  $2\pi$-periodic.
\end{remark} 
         
In  applications,  Assumption {\rm C} can be verified by requiring  $A$ and $ K_0$ to be   pseudodifferential operators and $K_0$ to fulfill an Egorov-like theorem. 

\begin{remark}
\label{rem:K0.csg}
We do not require explicitly $K_0$ to have constant spectral gaps; however, in applications, the only operators that we could find that verify both Assumption {\rm A} and {\rm C} have constant spectral gaps.
\end{remark}

Our main result is the following one:
\begin{theorem}
\label{thm:main}
Assume  {\rm A, B, C}. There exists $V_A(t) \in C^\infty(\T, \cL(\cH^r))$ s.t.   $K_0 + V_A(t)$ generates  unbounded orbits. More precisely there exists a smooth solution $\psi(t)$ of  \eqref{LS},   with $\psi(t) \in \cH^r$ $\, \forall t$, fulfilling  \eqref{growth.os}.
\end{theorem}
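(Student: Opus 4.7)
The plan is to take the construction advertised in the introduction literally: set
\[
V_A(t) := e^{-\im t K_0}\, A\, e^{\im t K_0},
\]
and check that the gauge change $\phi := e^{\im t K_0}\psi$ intertwines the flow of $K_0 + V_A(t)$ with that of $A$. Since $A$ has an unbounded orbit by Assumption B and the gauge is an isometry on every $\cH^r$, the unboundedness transfers to $\psi$.

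First I would verify the regularity of $V_A$: smoothness of $t\mapsto V_A(t)\in\cL(\cH^r)$ is literally Assumption C, and $2\pi$-periodicity follows from Assumption A via Remark \ref{rem:flow.per}, since the scalar phases $e^{\pm\im 2\pi\lambda}$ produced by the two conjugating factors cancel. Hence $V_A \in C^\infty(\T, \cL(\cH^r))$, as required by the theorem.

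Next, let $\phi(t)\in\cH^r$ be the solution of $\im\dot\phi = A\phi$ with $\phi(0)=\psi_0$ provided by Assumption B; since $A\in\cL(\cH^r)$ is bounded, $\phi\in C^\infty(\R,\cH^r)$. Define $\psi(t) := e^{-\im t K_0}\phi(t)$, so that $\psi(t)\in\cH^r$ for every $t$. A direct differentiation yields
\[
\im\dot\psi \;=\; K_0\, e^{-\im t K_0}\phi + e^{-\im t K_0}\, A\,\phi \;=\; K_0\psi + \bigl(e^{-\im t K_0} A\, e^{\im t K_0}\bigr)\psi \;=\; (K_0 + V_A(t))\psi,
\]
so $\psi$ solves \eqref{LS} with initial datum $\psi_0$. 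Since $e^{-\im t K_0}$ is an isometry on $\cH^r$, one has $\norm{\psi(t)}_r = \norm{\phi(t)}_r \geq C_r\, t^{\mu r}$ for $t\gg 1$, which is exactly \eqref{growth.os}.

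There is no substantive obstacle: the three assumptions have been tailored so that periodicity of $V_A$, its boundedness on $\cH^r$, and the unbounded orbit of the model operator $A$ are all directly available, and the conjugation argument is a one-line computation. The only bookkeeping point is the cancellation of the phase $e^{\im 2\pi\lambda}$ when verifying periodicity, which is already recorded in Remark \ref{rem:flow.per}; everything else is an immediate consequence of unitarity of $e^{-\im t K_0}$ on the scale $\{\cH^r\}$.
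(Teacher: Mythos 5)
Your proof is correct and takes essentially the same route as the paper: define $V_A(t)=e^{-\im t K_0}A\,e^{\im t K_0}$, invoke Assumptions A and C (via Remark \ref{rem:flow.per}) for $V_A\in C^\infty(\T,\cL(\cH^r))$, and transport the unbounded orbit of Assumption B through the gauge $\psi=e^{-\im t K_0}\phi$, which is an isometry on every $\cH^r$. You merely spell out the one-line differentiation that the paper leaves implicit.
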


\begin{proof}
The proof is trivial.
Define  $V_A(t)$ as in \eqref{def:VA}. By {\rm Assumption A} and {\rm C}, it belongs to $C^\infty(\T, \cL(\cH^r))$.
The  change of coordinates  $\psi = e^{- \im t  K_0}\vf$ conjugates $\im \dot \psi = (K_0 + V_A(t))\psi$ to \eqref{eq:A}  and preserves the norm $\norm{\cdot}_r$ for any time $t$.
Then {\rm Assumption B} implies the claim.
\end{proof}

We comment now on  Assumption B. 
First remark that a necessary condition for \eqref{growth.os} to be fulfilled is that $[K_0, A] \neq 0$. 
Then a sufficient condition is that only a finite number of  iterated commutators of $A$ and $K_0$ are  not zero.
We define $\ad_A (B) := [A, B]$.
\begin{lemma}
\label{lem:ad.0}
 Assume that for some  $N \in \N$  one has 
\begin{equation}
\label{ad.0}
\ad_A^j(K_0) \neq 0 \ , \ \ \forall 1 \leq j \leq N \ ,   \qquad \ad_A^{N+1} (K_0)  = 0 \ .
\end{equation}
Fix $r \in \N$ and choose  $\psi_0 \in \cH^r$ such that 
\begin{equation}
\label{ad.0.psi0}
\left[\ad_A^{N}(K_0)\right]^{r} \psi_0 \neq 0 \ .
\end{equation}
Then there exists  $C(r, N, \psi_0) >0$ such that the solution
$\psi(t)$ of \eqref{growth.os} with initial datum $\psi_0$ fulfills 
\begin{equation}
\label{ad.0.est}
\norm{\psi(t)}_r \geq C(r,  N, \psi_0) \, \la t \ra^{r N} \ , \quad t \gg 1 \ . 
\end{equation}
\end{lemma}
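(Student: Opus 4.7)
The plan is to work in the Heisenberg picture for $A$ and exploit the fact that the nilpotency hypothesis \eqref{ad.0} forces the conjugate of $K_0$ to be a polynomial in $t$ of degree exactly $N$. Concretely, defining $K_0(t) := e^{\im t A}K_0 e^{-\im t A}$ and using that $e^{-\im tA}$ is an isometry on $\cH^0$ (since $A$ is self-adjoint), one has
\begin{equation*}
\|\psi(t)\|_r = \|K_0^r e^{-\im tA}\psi_0\|_0 = \bigl\|e^{\im tA}K_0^r e^{-\im tA}\psi_0\bigr\|_0 = \|K_0(t)^r\psi_0\|_0,
\end{equation*}
so it is enough to prove a polynomial lower bound for $\|K_0(t)^r\psi_0\|_0$.

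First I would show by induction (using the identity $[A,\, e^{\im tA}Xe^{-\im tA}] = e^{\im tA}[A,X]e^{-\im tA}$) that $\tfrac{d^j}{dt^j}K_0(t) = \im^j\, e^{\im tA}\ad_A^j(K_0) e^{-\im tA}$. Under the hypothesis $\ad_A^{N+1}(K_0)=0$, all derivatives of order $\geq N+1$ vanish identically, so Taylor's formula yields the exact finite expansion
\begin{equation*}
K_0(t) \;=\; \sum_{j=0}^N \frac{(\im t)^j}{j!}\,\ad_A^j(K_0).
\end{equation*}
Raising to the $r$-th power and ordering by $t$ then produces a polynomial of degree $rN$ in $t$ with operator coefficients, whose top coefficient is $\frac{\im^{rN}}{(N!)^r}\bigl[\ad_A^N(K_0)\bigr]^r$.

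Applying to $\psi_0$ and invoking the non-vanishing hypothesis \eqref{ad.0.psi0}, a direct triangle inequality
\begin{equation*}
\|K_0(t)^r\psi_0\|_0 \;\geq\; \frac{t^{rN}}{(N!)^r}\bigl\|[\ad_A^N(K_0)]^r\psi_0\bigr\|_0 \;-\; C_{r,N,\psi_0}\, t^{rN-1}
\end{equation*}
finishes the argument, since the first term eventually dominates and gives \eqref{ad.0.est}. The only point requiring mild care is that the mixed products $\ad_A^{j_1}(K_0)\cdots \ad_A^{j_r}(K_0)$ appearing in the expansion of $K_0(t)^r$ be well defined on $\psi_0$ and land in $\cH^0$; this is automatic, since $\psi(t)\in \cH^r$ by Assumption B gives $K_0^r\psi(t)\in \cH^0$, and conjugating back shows $K_0(t)^r\psi_0\in \cH^0$ for every $t$, so the polynomial identity holds as an equality of $\cH^0$-valued functions of $t$. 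I do not foresee any serious obstacle: the whole argument reduces to an elementary comparison of polynomials in $t$ once the Heisenberg conjugation has been set up.
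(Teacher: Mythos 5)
Your proof is correct, and it is essentially the same as the paper's: both use the Lie (Heisenberg) conjugation formula $e^{\im t A} B e^{-\im t A} = \sum_{j\ge 0}\tfrac{(\im t)^j}{j!}\ad_A^j(B)$, the nilpotency hypothesis \eqref{ad.0} to truncate the series, identification of the top $t$-coefficient as a power of $\ad_A^N(K_0)$, and the non-vanishing hypothesis \eqref{ad.0.psi0} to make that top term dominate. The only organizational difference is that the paper works with the squared norm $\norm{\psi(t)}_r^2 = \la e^{\im tA}K_0^{2r}e^{-\im tA}\psi_0,\psi_0\ra$, applying the Lie formula to $K_0^{2r}$ and the Leibniz rule for $\ad_A$; you instead work with $\norm{\psi(t)}_r = \norm{K_0(t)^r\psi_0}_0$, apply the Lie formula once to $K_0$, and then raise the resulting degree-$N$ polynomial to the $r$-th power. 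Expanding that $r$-fold product is Leibniz in disguise, so the two computations are the same. Your norm-based formulation is marginally cleaner because the top coefficient contributes $\norm{[\ad_A^N(K_0)]^r\psi_0}_0 > 0$ directly, whereas the paper's inner-product form needs the (unstated) observation that $\ad_A^N(K_0)$ is self- or anti-self-adjoint to conclude that $\la[\ad_A^N(K_0)]^{2r}\psi_0,\psi_0\ra = \pm\norm{[\ad_A^N(K_0)]^r\psi_0}_0^2\neq 0$.
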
 
The proof of the Lemma is postponed in Appendix \ref{app:A}.

\begin{remark}
\label{rem:ad.0}
Condition  \eqref{ad.0} 
can be replaced by  
$\ad_A^N (K_0') \neq 0$,  $\ad_A^{N+1} (K_0')  = 0$ where  $K_0'$ is any operator defining   norms equivalent to $\norm{\cdot}_r$.
\end{remark}

We conclude this section with an example of an operator $A$ with absolutely continuous spectrum which has unbounded orbits; such example will  guide us in the applications:
\begin{example}
\label{ex}
Let $H^r(\T) = \dom((1-\partial_{xx})^{r/2})$ be the classical Sobolev space on the one dimensional torus $\T$. Define $A = v(x)$ (multiplication operator), with $v(x) \in C^\infty(\T, \R)$ and $\grad v \not\equiv 0$. Then the equation 
 $$
\im \dot \vf = v(x) \vf \ , \qquad  \mbox{with }\vf(0) \in H^r(\T) \ \  \mbox{ and } \ \ (\grad v)\cdot \vf(0) \not\equiv 0
$$
has a solution $\vf(t) \in H^r(\T)$ fulfilling \eqref{growth.os} with $\mu = 1$.  
This follows applying Lemma \ref{lem:ad.0} and Remark \ref{rem:ad.0} with  $K_0' = \partial_x$ and 
noting  that 
$[ v(x), \partial_x] \neq 0$, $[v(x), [v(x), \partial_x]] = 0$.
\end{example}

\section{Applications}\label{sec:app}
In this section we apply  Theorem \ref{thm:main} to three different time dependent linear Schr\"odinger equations. 
In each case we construct a periodic in time perturbation which induces growth of norms.

In this section, for $\Omega\subset \R^d$ and $\cF$ a Fr\'echet space, 
we will denote by $C_b^m(\Omega, \cF)$ the space of
$C^m$ maps $f: \Omega\ni x\mapsto f(x)\in\cF$, such that,  for every
seminorm $\norm{\cdot}_j$ of $\cF$ one has
       \begin{equation}
       \label{star}
       \sup_{x\in\Omega}\Vert\partial_x^\alpha f(x)\Vert_{j} < +\infty
       \ , \quad \forall \alpha\in \N^d\  :\ \left|\alpha\right|\leq m  \ .
       \end{equation}
If \eqref{star} is true
$\forall m$, we say $f \in C^\infty_b(\Omega, \cF)$.\\

\subsection{Harmonic oscillator on $\R$}
Consider the Schr\"odinger equation
\begin{equation}
\label{ho}
\im \dot \psi = \frac{1}{2} \left(-\partial_{x}^2 + x^2\right) \psi + V(t, x, D_x) \psi \ , \qquad x \in \R \ . 
\end{equation}
Here $K_0 := \frac{1}{2} \left(-\partial_{x}^2 + x^2\right)$ is the Harmonic oscillator, the scale of  Hilbert spaces is defined  as 
usual by  $\cH^r = \dom\left(K_0^r\right)$, and the base space $(\cH^0, \la \cdot, \cdot, \ra)$ is $L^2(\R, \C)$   with its  standard  scalar product.\\
The perturbation  $V$ is chosen as the Weyl quantization of a symbol belonging to the following class
\begin{definition}
\label{symbol.ao2}
A function $f$ is a  symbol of order  $\rho \in\R$ if  $f \in C^\infty(\R_x \times \R_\xi, \R)$ and 
               $\forall \alpha, \beta \in \N$, there exists $C_{\alpha, \beta} >0$ such that
$$
 \vert \partial_x^\alpha \, \partial_\xi^\beta f( x,\xi)\vert \leq C_{\alpha,\beta} \ (1 + |x|^2 + |\xi|^2)^{\rho-\frac{|\beta| + |\alpha|}{2}}  \ . 
$$
             We will write $f \in S^\rho_{\hos}$.
\end{definition}
   We endow $S^\rho_\hos$  with the family of seminorms
$$
\wp^\rho_j(f) := \sum_{|\alpha| + |\beta| \leq j}
\ \ \sup_{(x, \xi) \in \R^{2}} \frac{\left|\partial_x^\alpha \, \partial_\xi^\beta f(x,\xi)\right|}{ \left[1 + |x|^2 + |\xi|^2\right]^{\rho-\frac{\beta +\alpha}{2}} } \ , \qquad j \in \N \cup \{ 0 \} \ . 
$$
Such seminorms turn $S^\rho_\hos$ into a Fr\'echet space.\\
If a symbol $f$ depends on additional parameters (e.g. it is time dependent), we ask that all the seminorms  are uniform w.r.t. such parameters.

 To a symbol $f \in S^\rho_\hos$ we associate the operator
$f(x, D_x)$  by standard Weyl quantization
 
       $$
   \Big( f(x, D_x) \psi \Big)(x) := \frac{1}{2\pi} \iint_{y, \xi \in \R} {\rm e}^{\im (x-y)\xi} \, f\left( \frac{x+y}{2}, \xi \right) \, \psi(y) \, \di y \di \xi \ . 
        $$

%We also need the following
%\begin{definition}
%    \label{smoothing}
%    %   Let $S\colon \cH^{+\infty} \to \cH^{-\infty}$.
%     We say  that $S\in\cL(\cH^{+\infty},\cH^{-\infty} )$ is $N$-smoothing if  $\forall \kappa \in \R$, it  can be extended  to an operator in   $\cL(\cH^{\kappa}, \cH^{\kappa+N})$.  When this is true  for  every $N\geq 0$, we say that $S$  is a smoothing  operator.
%    \end{definition}
%We are now ready to define pseudodifferential operators:

\begin{definition}
\label{pseudo.an}
  We say that $F\in \cA_\rho$ if it is a pseudodifferential operator
  with symbol of class $S^\rho_{\hos}$, i.e., if there
  exists $f \in S^\rho_{\hos}$  such that $F = f(x, D_x)$.
\end{definition}

\begin{remark} The harmonic oscillator $K_0$ has symbol given by $x^2 + \xi^2$; by our definition  $K_0 \in \cA_1$.
\end{remark}

As usual we give  $\cA_\rho$ a Fr\'echet structure by endowing it with the seminorms of the symbols.

Our first application is to construct a time dependent pseudodifferential operator of order 0 which provokes growth of Sobolev norms.
In such a way we obtain an alternative, simpler construction of the result of Delort \cite{del}.

\begin{theorem}
\label{thm:hos}
There exists a time periodic pseudodifferential operator of order 0,  $V\in C^\infty_b(\T, \cA_0)$, and for any $r \in \N$ a constant $C_r >0$ and a smooth solution $\psi(t)$ of \eqref{ho} s.t. for any $t \geq 0$, $\psi(t) \in \cH^r$ and satisfies 
$ \norm{\psi(t)}_r \geq C_r \, t^{r} $
 for $t$ large enough.
\end{theorem}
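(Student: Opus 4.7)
The plan is to apply Theorem~\ref{thm:main} with $K_0 = \frac{1}{2}(-\partial_x^2 + x^2)$. Assumption~A is immediate: $\mathrm{spec}(K_0) = \{n+1/2 : n \in \N\} \subset \N + 1/2$. Hence the task reduces to exhibiting a single bounded operator $A \in \cA_0$ satisfying Assumptions~B and~C, after which Theorem~\ref{thm:main} delivers $V = V_A$ with the required growth.

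Assumption~C is essentially automatic. The classical Hamilton flow of the Weyl symbol of $K_0$ is the phase-space rotation $\Phi_t(x,\xi) = (x\cos t + \xi\sin t,\, -x\sin t + \xi\cos t)$, and since the weight $1 + x^2 + \xi^2$ defining the symbol class $S^0_\hos$ is rotation-invariant, the class $\cA_0$ is preserved by conjugation with $\mathrm{e}^{\im t K_0}$. Because $K_0$ has quadratic Weyl symbol, Egorov's theorem is exact here: the symbol of $V_A(t) = \mathrm{e}^{-\im t K_0} A \mathrm{e}^{\im t K_0}$ is $a\circ \Phi_t$, with all $S^0_\hos$-seminorms bounded uniformly in $t$, and depending smoothly on $t$. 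Combined with Remark~\ref{rem:flow.per} this gives $V_A \in C^\infty_b(\T, \cA_0)$.

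For Assumption~B the natural candidate is an $A$ whose principal symbol, written in action–angle coordinates $(I,\theta) = ((x^2+\xi^2)/2,\, \arctan(\xi/x))$, depends only on $\theta$ outside a neighborhood of the origin. Concretely, fix $h \in C^\infty(\T,\R)$ with $h' \not\equiv 0$ and a cutoff $\chi \in C^\infty(\R_+,[0,1])$ vanishing near $0$ and equal to $1$ on $[1,\infty)$; set $a(x,\xi) = \chi(x^2+\xi^2)\, h(\theta(x,\xi))$. A direct check of the derivative bounds shows $a \in S^0_\hos$, and I take $A = a^w$. Classically the Hamilton equations of $a$ read $\dot I = \partial_\theta a$, $\dot\theta = -\partial_I a$, reducing at large $I$ to $\dot I = h'(\theta)$, $\dot\theta = 0$, so the action grows linearly: $I(t) = I_0 + t\, h'(\theta_0)$. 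Quantization via Egorov yields $\mathrm{e}^{\im t A} K_0 \mathrm{e}^{-\im t A} = K_0 + t B + R(t)$ with $B \in \cA_0$ having principal symbol $h'(\theta)$ and $R(t)$ of strictly lower order; applied to a coherent-state datum $\psi_0$ microlocalized at a point where $h'(\theta_0)\neq 0$ this gives $\norm{\mathrm{e}^{-\im t A}\psi_0}_r = \norm{K_0^r \mathrm{e}^{-\im t A}\psi_0}_0 \geq C_r\, t^r$ for $r \in \N$ and $t$ large.

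The main technical hurdle is this last step, namely making the Egorov estimate quantitative and uniform in time and showing that the $r$-fold iteration preserves the lower bound despite subprincipal remainders. A cleaner variant is to invoke Lemma~\ref{lem:ad.0} together with Remark~\ref{rem:ad.0}: one chooses an auxiliary $K_0'$ (morally, a microlocal realization of $I$ supported in an angular sector where $h'$ is nonzero) that defines norms equivalent to $\norm{\cdot}_r$ and for which the iterated brackets of $A$ with $K_0'$ terminate at $N=1$, since $h'(\theta)$ Poisson-commutes with $h(\theta)$; this produces the sharp bound $\norm{\psi(t)}_r \geq C_r t^r$ saturating the upper bound \eqref{ub1}. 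Theorem~\ref{thm:main} applied to this $A$ then yields $V = V_A \in C^\infty_b(\T, \cA_0)$ and a smooth solution $\psi(t) \in \cH^r$ of \eqref{ho} with $\norm{\psi(t)}_r \geq C_r\, t^r$ for large $t$.
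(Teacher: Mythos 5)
Your verification of Assumptions A and C is correct and matches the paper's argument (exact Egorov for a quadratic symbol, rotation invariance of the weight). The gap is in Assumption B, and it is a genuine one.

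You propose $A = a^w$ with $a = \chi(x^2+\xi^2)\,h(\theta)$ and then want to apply Lemma~\ref{lem:ad.0} through Remark~\ref{rem:ad.0}. But Lemma~\ref{lem:ad.0} needs the \emph{operator} commutator $\ad_A^{N+1}(K_0')$ to vanish \emph{exactly}, not just its principal symbol. The fact that the Poisson bracket $\{h(\theta),\{h(\theta),I\}\}=0$ does not make the Moyal/operator bracket $[A,[A,K_0']]$ vanish: for generic pseudodifferential $A$ there are lower-order corrections, and moreover near the origin the cutoff $\chi$ destroys even the classical identity. Your fallback — an auxiliary $K_0'$ that is ``a microlocal realization of $I$ supported in an angular sector where $h'$ is nonzero'' — is self-contradictory: an operator microlocalized to a sector cannot define norms equivalent to $\norm{\cdot}_r$ on all of $\cH^r$, which Remark~\ref{rem:ad.0} requires. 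The alternative route you sketch (quantitative Egorov with uniform control on remainders over $r$-fold iterations) is precisely the ``technical hurdle'' you flag, and it is not carried out.

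The paper resolves this in the opposite order. It \emph{defines} $A$ algebraically on the Hermite basis, $A\be_0=\delta\be_1$, $A\be_n=\delta(\be_{n+1}+\be_{n-1})$, i.e.\ the discrete Laplacian on $\ell^2(\N_0)$. For this $A$ the relations $[A,K_0]\be_n=\delta(\be_{n-1}-\be_{n+1})$ and $[A,[A,K_0]]=0$ are exact algebraic identities, so Lemma~\ref{lem:ad.0} applies with $N=1$ to \emph{any} nontrivial $\psi_0$. One then proves a posteriori, via Chodosh's matrix characterization (Theorem~\ref{thm:chod}), that this $A$ lies in $\cA_0$, and Egorov gives Assumption C. Your intuition that $A$ should be ``multiplication by a function of the angle'' is exactly right (this is Remark~\ref{rem:os.cos}: a discrete sine transform conjugates the paper's $A$ to multiplication by $\delta\cos y$ on $\T$), but to get the exact commutator cancellation you need to pick the specific operator that realizes this identity on the nose, rather than a general Weyl quantization of $h(\theta)$.
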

\begin{proof}
We verify that Assumptions A, B, and C are met.\\
{\sc Assumption A:} It follows from   $\sigma(K_0) = \{ n + \frac{1}{2}\}_{n \geq 0}$.\\
{\sc Assumption B:}
We define $A$ by its action on the Hermite functions $(
\be_n)_{n \in \N_0}$ (which  are eigenvectors of the Harmonic oscillator and  form a basis of $L^2(\R))$.  
We take $\delta \neq 0$ arbitrary and define
\begin{equation}
\label{def:A}
\begin{aligned}
&A\be_0 :=  \delta \be_1 \ , \qquad A\be_n := \delta  \left( \be_{n+1} + \be_{n-1}\right) \  \quad  \mbox{for }n \geq 1 \ ;
\end{aligned}
\end{equation}
the action of $A$ is extended on all $\cH^r$ by linearity, giving
$A \psi = \delta \sum_{n \geq 0} (\psi_{n-1} + \psi_{n+1}) \be_n$, where we defined  $\psi_n = \la \psi, \be_n \ra$ for $n \geq 0$ and $\psi_{-1} =0$. Clearly $A \in \cL(\cH^r)$ $\forall r \geq 0$.\\
 By Lemma \ref{lem:A.gr} below its propagator $e^{- \im t A}$  has unbounded orbits fulfilling \eqref{growth.os} with $\mu = 1$ and any initial datum.\\
{\sc Assumption C:} 
By Lemma \ref{lem:A.A0} below,  $A$ is a pseudodifferential operator of order 0,  $A \in \cA_0$. 
By   Egorov theorem for the Harmonic oscillator \cite{hero, hero2} (and using the periodicity of the flow of $K_0$)   $t \mapsto  e^{-\im t K_0} A e^{ \im t K_0}\equiv V_A(t) \in C^\infty_b(\T, \cA_0)$.
This can be seen e.g. by remarking that the symbol of $V_A(t)$ is  
$a\circ \phi_{ho}^t$, where $a \in S^0_\hos$ is the symbol of $A$ and  $\phi^t_{ho}$ is the time $t$ flow of the harmonic oscillator; explicitly 
\begin{align}
\label{symb}
\left(a\circ \phi^t_{ho}\right)(x, \xi) = a(x \cos t + \xi \sin t, -x \sin t + \xi \cos t) \ .
\end{align}
\end{proof}

\begin{remark}
The parameter $\delta$ in \eqref{def:A} can be arbitrary small; therefore, also the perturbation $V_A(t)$ can be arbitrary small in size.
\end{remark}

\begin{remark}
Consider \eqref{ho} with the perturbation   $V_A(\omega t, x, D_x)$, $\omega \in \R$, which is periodic in time with frequency $\omega$.
Then it is proved in \cite{Bam16I} that for any $\delta$  sufficiently small, there exists a Cantor set $\cO_\delta \subset [0, 1]$ such that if $\omega \in \cO_\delta$,   each solution of \eqref{ho} fulfills \eqref{up3}.
It is clear therefore that the growth of Sobolev norms depends on  resonance properties of the frequency $\omega$.
\end{remark}

\begin{remark}
In the basis of Hermite functions, the operator $A$ is the discrete Laplacian on the half line $\ell^2(\N_0)$ with Dirichlet boundary conditions, hence  it has absolutely continuous spectrum.
\end{remark}

\begin{remark}
\label{rem:os.cos}
The  Fourier transform $ (\psi_n)_{n \in \N_0} \mapsto \sum_{n \geq 0} \psi_n \sin\left((n+1)y\right) $ maps $\cH^r$ in $H^r(\T)$ and conjugates $A$ to  the multiplication operator by $\delta \cos(x)$; therefore we  are in the framework of Example \ref{ex}.
\end{remark}

\begin{remark}
One has $V_A \in C^\infty(\T, \cL(\cH^r))$ for any $r \geq 0$. 
Indeed by  Calderon-Vaillancourt theorem for the class $\cA_\rho$ (see e.g. \cite{robook}), for any $r \in \N$, there exists $N \in \N$ and $C_{r, N}>0$ such that  
$$
\sup_{t \in \T} \norm{ \partial_t^\ell  V_A(t) }_{\cL(\cH^r)} \leq C_{r,N} \,  \wp^\rho_N \left(\partial_t^\ell a(t, x, \xi)\right)  < \infty \ , \quad \forall \ell \in \N_0 \ ,
$$
where $a(t, x, \xi) \equiv (a \circ \phi^t_{ho})(x, \xi)$  is the symbol of $V_A(t)$, see \eqref{symb}, and belongs to   $C^\infty_b(\R, S^0_\hos)$.
\end{remark}

First we prove that the Hamiltonian $A$ generates unbounded orbits:
\begin{lemma}
\label{lem:A.gr}
Let $A$ be defined  in \eqref{def:A} and consider the equation 
$\im \dot \psi = A \psi$. For any $r \in \N$, any nonzero $\psi_0 \in \cH^r$, there exists a constant $C_{r, \psi_0} >0$ such that
$\norm{e^{-\im t A }\psi_0}_r \geq C_{r, \psi_0} \la t \ra^r$ for $t \gg 1$.
\end{lemma}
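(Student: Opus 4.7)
The plan is to reduce the statement to Example \ref{ex} via an explicit Fourier-type conjugation, and then invoke Lemma \ref{lem:ad.0} combined with Remark \ref{rem:ad.0}. Define the linear map $U: \cH^r \to H^r(\T)$ by
\[
U\psi := \sum_{n\geq 0} \psi_n \sin\bigl((n+1)y\bigr), \qquad \psi_n := \langle \psi, \be_n\rangle.
\]
Since $K_0 \be_n = (n+\tfrac12)\be_n$, the norm $\|\psi\|_r^2 = \sum_n (n+\tfrac12)^{2r}|\psi_n|^2$ is comparable to $\|U\psi\|_{H^r(\T)}^2 \sim \sum_n (n+1)^{2r}|\psi_n|^2$, so $U$ is a bi-Lipschitz isomorphism between $\cH^r$ and the odd subspace of $H^r(\T)$.

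Using the elementary identity $\sin(ny) + \sin((n+2)y) = 2\cos(y)\sin((n+1)y)$, a direct computation shows that $U$ conjugates $A$ to the multiplication operator by $v(y):=2\delta\cos y$ acting on odd functions; in particular $U e^{-\im t A}\psi_0 = e^{-\im t v(y)} U\psi_0$. Hence the lemma is equivalent to showing that, for any nonzero $\vf_0 \in H^r(\T)$, the solution $\vf(t)= e^{-\im t v(y)}\vf_0$ satisfies $\|\vf(t)\|_{H^r}\gtrsim \langle t\rangle^r$.

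At this point I would simply invoke Lemma \ref{lem:ad.0} together with Remark \ref{rem:ad.0}, choosing $K_0' = -\im\partial_y$, which defines an equivalent norm to $\|\cdot\|_{H^r(\T)}$ on the zero-mean (in particular odd) subspace. A one-line check gives
\[
\ad_{v}(K_0') = 2\im\delta\sin y \neq 0, \qquad \ad_{v}^2(K_0') = 0,
\]
because commutators of two multiplication operators vanish; so the hypothesis \eqref{ad.0} holds with $N=1$. The non-degeneracy condition \eqref{ad.0.psi0} becomes $(2\im\delta\sin y)^{r}\vf_0 \not\equiv 0$, which holds for every nonzero $\vf_0\in L^2(\T)$ since $\sin y$ vanishes only on the null set $\{0,\pi\}$. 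Lemma \ref{lem:ad.0} then yields $\|\vf(t)\|_{H^r}\geq C\,\langle t\rangle^r$, and transferring back through $U$ gives the claim.

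The only non-routine step is verifying that $K_0'=-\im\partial_y$ indeed generates norms equivalent to $\|\cdot\|_{H^r(\T)}$ on the image of $U$ (i.e.\ on the odd subspace); this is standard since odd functions have zero mean, but it is the step that lets us bypass the abstract $K_0$ — note that directly on $\cH^r$ one computes $\ad_A^2(K_0)\neq 0$ so the original $K_0$ does not fit the hypothesis, which is precisely why Remark \ref{rem:ad.0} is needed.
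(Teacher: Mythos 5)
Your argument is correct, and it is essentially a fleshed‑out version of the route the paper itself suggests first: Remark~\ref{rem:os.cos} (the Fourier–sine conjugation to multiplication by $\delta\cos y$) combined with Example~\ref{ex}, i.e.\ Lemma~\ref{lem:ad.0} applied with $K_0'=-\im\partial_y$ via Remark~\ref{rem:ad.0}. The check that $-\im\partial_y$ gives equivalent norms on the odd (zero‑mean) subspace, the parity‑invariance of $e^{-\im t v}$, and the observation that $(\sin y)^r\vf_0\not\equiv 0$ for any nonzero $\vf_0$ are exactly the points one needs to make this precise, and you handle them correctly.

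Your aside that ``directly on $\cH^r$ one computes $\ad_A^2(K_0)\neq 0$'' is not only right but actually corrects the paper's own \emph{alternative} computation. Writing $P_0\psi:=\la\psi,\be_0\ra\be_0$, one finds, because of the Dirichlet boundary at $n=0$,
\[
\ad_A^2(K_0)\,\be_0 \;=\; -2\delta^2\,\be_0\,, \qquad \ad_A^2(K_0)\,\be_n \;=\;0 \quad (n\geq 1)\,,
\]
i.e.\ $\ad_A^2(K_0) = -2\delta^2 P_0 \neq 0$, contradicting the paper's displayed claim that $[A,[A,K_0]]\be_n = 0$ for \emph{all} $n\in\N_0$. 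Moreover the higher iterated commutators $\ad_A^j(K_0)$ do not terminate either, so the hypotheses of Lemma~\ref{lem:ad.0} genuinely fail for the pair $(A,K_0)$ on $\cH^r$. Thus the ``alternative'' proof offered in the paper does not close as written, and the conjugation route you take --- which transplants the problem to $\T$, where the boundary disappears and $\ad_v^2(-\im\partial_y)=0$ exactly --- is the one that actually works; this is precisely the role of Remark~\ref{rem:ad.0}, as you point out.
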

\begin{proof}
Remark \ref{rem:os.cos} gives essentially a proof of the statement. Alternatively, one can also apply Lemma \ref{lem:ad.0} as follows. 
Let $\be_n$ be the $n^{th}$ Hermit function;   then 
a direct computation using \eqref{def:A} and $K_0 \be_n = (n+\frac12) \be_n$  shows  that
$$
[A, K_0]\be_n = \delta (\be_{n-1} - \be_{n+1})  \ , \qquad [A, [A, K_0]]\be_n = 0 \ , \qquad \forall n \in \N_0  \ . 
$$
Then we  apply   Lemma \ref{lem:ad.0} with $N = 1$ (\eqref{ad.0.psi0} is fulfilled for any nontrivial  $\psi_0 \in \cH^r$). 
%  
%$$
%\norm{\psi}_r \equiv \norm{K_0^r \psi}_{L^2} \approx \left(\sum_{n\in \N} \la n\ra ^{2r} \, \abs{\psi_n }^2\right)^{1/2} \ , 
%$$
%where $\psi_n = \la \psi, \be_n \ra$.
%The equation $\im \dot \psi = A \psi$ in the basis $(\be_n)_{n \in \N_0}$ is given by
%\begin{equation}
%\label{dse}
%\im \dot \psi_n = \psi_{n+1} + \psi_{n-1} \ , \qquad n \in \N_0 \ ,
%\end{equation}
%namely it is the free half-line discrete Schr\"odinger equation on $\ell^2(\N_0)$ with the self-adjoint boundary condition $\psi_{-1}  = 0$. 
%It is well known that for any not zero initial datum  $\psi(0) \in \cH^r$, \eqref{dse}  has a solution $\psi(t) \in \cH^r$ fulfilling
%$$
% \left(\sum_{n\in \N} \la n\ra ^{2r} \, \abs{\psi_n(t) }^2\right)^{1/2}  \geq C_r \la t \ra^r \ , \qquad t \gg 1 \ .
%$$
%A simple way to verify this is to notice that the map $U: (\psi_n)_{n \geq 0} \mapsto \sum_{n \geq 0} \psi_n \, \sin\left((n+1)y\right)$ is an isomorphism between $\cH^r$ and the classical Sobolev space $H^r(\T)$ of functions with $r$ derivatives in $L^2$, which conjugates \eqref{dse} to the Schr\"odinger equation $\im \dot g  = (\cos y) g$, whose solutions fulfill 
% $  \norm{\psi(t)}_{H^r} \geq C_r \, t^{r}$, $t \gg 1$ (see Remark \ref{rem:conclusion}).
%This proves \eqref{gr.L}.
\end{proof}

Finally we show that $A$ is a pseudodifferential operator; to this aim,  it is necessary  to  read the pseudodifferential properties of an operator by its matrix coefficients on the basis of Hermit functions. 
Any  linear self-adjoint operator $A :\cH^\infty \to \cH^{-\infty}$ is completely determined by  its {\em matrix}  
\begin{equation}
\label{matrix}
M^{(A)} \colon \N_0 \times \N_0 \to \R  \ , \qquad
(m,n) \to \la A \be_m, \be_n \ra \ .
\end{equation}
 Define  the discrete difference operator $\triangle$ on a function $M \colon \N_0 \times \N_0 \to \R$ by
$$
(\triangle M)(m,n) := M(m+1, n+1) - M(m,n)  \ ,
$$
and its powers $\triangle^\gamma$ by  $\triangle$ applied $\gamma$-times.
Now we have the following 
\begin{definition}
A symmetric function $M \colon \N_0 \times \N_0 \to \R$ will be said to be a {\em symbol matrix of order $\rho$} if for any $\gamma, N \in \N$, there exists $C_{\gamma, N} >0$ such that 
$$
 \abs{(\triangle^\gamma M)(m,n)} \leq C_{\gamma, N} \frac{(1 + m + n)^{\rho-|\gamma|}}{(1 + |m-n|)^N}  \ .
$$
\end{definition}
The connection between pseudodifferential operators of order $\rho$ and symbol matrices of order $\rho$ is given by Chodosh's characterization \cite{chodosh}:
\begin{theorem}[Chodosh's characterization]
\label{thm:chod}
A selfadjoint operator $A$ belongs to $\cA_\rho$ if and only if its matrix  $M^{(A)}$ (as defined in \eqref{matrix})   is a symbol matrix of order $\rho$.
\end{theorem}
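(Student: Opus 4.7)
The plan is to reformulate both the symbol class $S^\rho_\hos$ and the notion of symbol matrix in terms of iterated commutators, and then to check that the two translations coincide. Recall the creation and annihilation operators
\[
a := \frac{x + \im D_x}{\sqrt 2}\ , \qquad a^* := \frac{x - \im D_x}{\sqrt 2}\ , \qquad a \be_n = \sqrt{n}\, \be_{n-1}\ , \quad a^* \be_n = \sqrt{n+1}\, \be_{n+1}\ ,
\]
which together with the number operator $K_0 = a^*a + \frac12$ generate the symbolic calculus of $\cA_\rho$. A Beals-type characterization, standard for the harmonic-oscillator calculus (see \cite{hero, hero2}), asserts that $A \in \cA_\rho$ if and only if for all $\alpha,\beta \in \N$ the iterated commutator $\ad_a^\alpha \ad_{a^*}^\beta A$ is bounded from $\cH^s$ to $\cH^{s-\rho+(\alpha+\beta)/2}$, uniformly in $s$. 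The theorem will be obtained by verifying that this analytic condition is equivalent, on the matrix side, to the symbol-matrix estimates.

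Next, I would read these commutator bounds at the level of the matrix $M^{(A)}$ on the Hermite basis. A direct computation gives
\[
\la [a, A] \be_m, \be_n\ra = \sqrt{n+1}\, M^{(A)}(m, n+1) - \sqrt{m}\, M^{(A)}(m-1, n)\ ,
\]
and a parallel identity for $[a^*, A]$. Suitable linear combinations of these relations show that, modulo the weights $\sqrt{1+m+n}$, the symmetric difference operator $\triangle$ is realized by a commutator of the type $\ad_a \ad_{a^*}$, while commutators with $a+a^*$ and $a-a^*$ (namely, with $\sqrt2\, x$ and $\sqrt2\, \im D_x$) act on $M^{(A)}$ as weighted shifts that modulate the off-diagonal distance $|m-n|$. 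Iterating $\gamma$ times then matches a $\gamma$-fold application of $\triangle$ with a $\gamma$-fold Beals commutator, producing the growth factor $(1+m+n)^{\rho-|\gamma|}$; the off-diagonal decay $(1+|m-n|)^{-N}$ is obtained independently by iterating $N$ commutators that raise or lower the Hermite index by one and invoking ellipticity of $K_0$ to absorb the $\sqrt n,\sqrt m$ weights in the $\cH^s$-norms.

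With these dictionaries in place, both implications reduce to bookkeeping. For the forward direction, given $A\in\cA_\rho$, the Beals bounds transfer through the matrix identities above and yield the required symbol-matrix estimate for $\triangle^\gamma M^{(A)}$. For the converse, given a symbol matrix $M$, one defines $A$ weakly by its matrix elements on the Hermite basis (the decay in $|m-n|$ together with Schur's test makes the sum converge in $\cL(\cH^s)$); the decay and growth conditions on $\triangle^\gamma M$ then furnish precisely the commutator bounds demanded by the Beals characterization, hence $A\in\cA_\rho$. The main obstacle will be the careful control of the $\sqrt{n}$ and $\sqrt{m}$ weights attached to $a$ and $a^*$: they generate lower-order corrections that must be tracked in order to identify the discrete object $\triangle$ with an exact operator-theoretic commutator, and to disentangle the two distinct roles of the symbol-matrix estimate, namely growth in $m+n$ controlling the order $\rho$ and rapid decay in $|m-n|$ encoding smoothness of the Weyl symbol.
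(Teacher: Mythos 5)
This theorem is stated in the paper as an external result of Chodosh~\cite{chodosh} and is used as a black box; the paper itself contains no proof, so there is nothing internal to compare your sketch against. Judged on its own, your route through a Beals-type characterization of the isotropic calculus is a legitimate strategy, and the first half of the dictionary (iterated commutators with $a$, $a^*$ lowering the order by $\tfrac12$ per commutator, reflected as weighted shifts on the Hermite matrix) is sound.

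The gap is in the off-diagonal decay. Iterated commutators $\ad_a^\alpha\ad_{a^*}^\beta A$ lower the pseudodifferential order by $(\alpha+\beta)/2$; they do not, by themselves, produce the factor $(1+|m-n|)^{-N}$. The operator that confines the matrix to a neighbourhood of the diagonal is $\ad_{K_0}$: since $K_0\be_n=(n+\tfrac12)\be_n$ one has
$\la [K_0,A]\be_m,\be_n\ra=(n-m)\,M^{(A)}(m,n)$,
and since $K_0$ has symbol of order $1$ the commutator $[K_0,A]$ remains of order $\rho$. Iterating $N$ times gives an order-$\rho$ operator whose matrix is $(n-m)^N M^{(A)}(m,n)$, and the $(1+|m-n|)^{-N}$ decay then follows from the crude pointwise matrix bound satisfied by any operator of order $\rho$ (Calderon--Vaillancourt after conjugation by powers of $K_0$). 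Of course $K_0=a^*a+\tfrac12$, so $\ad_{K_0}$ can be reorganized in terms of $\ad_a$, $\ad_{a^*}$ composed with $a$, $a^*$ --- presumably what your ``ellipticity of $K_0$'' alludes to --- but as written the sketch conflates \emph{shifting} the Hermite index with \emph{penalizing} large $|m-n|$, and these are distinct phenomena. A similar imprecision affects the $\triangle^\gamma M^{(A)}$ estimates: $\triangle$ is not an exact commutator with $a$ or $a^*$ (those are asymmetric weighted shifts), and realizing it as a Beals commutator modulo controlled corrections forces one to track the $\sqrt n$, $\sqrt m$ weights carefully; you flag this but leave it open, and it, together with the off-diagonal step, is exactly where a careless execution would break.
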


We can now prove: 
\begin{lemma}
\label{lem:A.A0}
The operator $A$ defined in \eqref{def:A} belongs to $\cA_0$.
\end{lemma}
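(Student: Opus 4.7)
The plan is to apply Chodosh's characterization (Theorem \ref{thm:chod}): it suffices to verify that the matrix $M^{(A)}(m,n) := \langle A\be_m, \be_n\rangle$ is a symbol matrix of order $0$, i.e., for every $\gamma, N \in \N$ there exists $C_{\gamma,N} > 0$ such that
$$
\bigl|(\triangle^\gamma M^{(A)})(m,n)\bigr| \leq C_{\gamma, N} \, \frac{(1+m+n)^{-\gamma}}{(1+|m-n|)^N} \ .
$$

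First I would read off the matrix explicitly from \eqref{def:A}: one has $M^{(A)}(m,n) = \delta$ whenever $|m-n|=1$, and $M^{(A)}(m,n)=0$ otherwise. The key observation is that this matrix depends only on the difference $m-n$, hence is invariant under the simultaneous shift $(m,n) \mapsto (m+1, n+1)$. Consequently $\triangle M^{(A)} \equiv 0$, and a fortiori $\triangle^\gamma M^{(A)} \equiv 0$ for every $\gamma \geq 1$. The required estimate is then trivially satisfied for all $\gamma \geq 1$.

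It remains to check the case $\gamma = 0$. Here the needed bound reduces to
$$
|M^{(A)}(m,n)| \leq C_{0,N} \, (1+|m-n|)^{-N} \ ,
$$
which follows immediately since $M^{(A)}$ is supported on the two diagonals $|m-n|=1$ and is bounded there by $|\delta|$; taking $C_{0,N} := 2^N |\delta|$ suffices. Symmetry of $M^{(A)}$ is obvious from the definition of $A$ (or from the self-adjointness of $A$). This verifies all the hypotheses of Theorem \ref{thm:chod} and concludes the proof.

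I do not foresee any real obstacle: the entire argument rests on the elementary remark that $M^{(A)}$ is a function of $m-n$ alone, which kills every application of $\triangle$, while the sharp localization near the diagonal handles the decay in $|m-n|$ for free.
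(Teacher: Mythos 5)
Your proposal is correct and follows the same route as the paper: compute the matrix $M^{(A)}$, observe it is a symbol matrix of order $0$, and invoke Chodosh's characterization (Theorem \ref{thm:chod}). The paper dismisses the verification as "a trivial computation"; your observation that $M^{(A)}$ depends only on $m-n$ (so $\triangle^\gamma M^{(A)} \equiv 0$ for $\gamma \geq 1$) together with the support on $|m-n|=1$ (handling $\gamma = 0$) is exactly the clean way to make that computation explicit.
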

\begin{proof}
By  formula \eqref{def:A}, the matrix of $A$ is given by
$$
M^{(A)}(m,n) := \la A \be_m , \be_n \ra = \delta_{n+1, m} + \delta_{n-1,m}  \ , \qquad n, m \in \N_0 \ .
$$
It is a trivial computation to verify that $M^{(A)}$ is a symbol matrix of order $0$, hence by Theorem \ref{thm:chod} it is a pseudodifferential operator in $\cA_0$. 
\end{proof}

 \subsection{Half-wave equation  on $\T$}
The half-wave equation on $\T$ is   given by 
 \begin{equation}
 \label{halfwave}
 \im \dot \psi = |D_x| \psi + V(t,x, D_x) \psi \ , \qquad x \in \T \ .
 \end{equation}
 Here  $|D_x| \psi$ is the Fourier multiplier defined by 
$$
  |D_x| \psi := \sum_{j \in \Z  } |j|\,  \psi_j \, e^{\im j x}  \ ,
 $$
where  $\psi_j := \int_\T \psi(x) e^{-\im j x} \di x$ is  the $j^{th}$ Fourier coefficient.

 We recall  now the usual    class of pseudodifferential operators on $\T$.  
For a function $f:\T\times \Z \to \R$, define the difference operator $\triangle f(x,j) := f(x,j+1)-f(x,j)$. 
 Then we have
 \begin{definition}
 A function $f: \T \times \Z \to \R$,  will be called a symbol of order $\rho \in \R$ if $ x\mapsto f(x, j)$ is smooth for any $j \in \Z$ and $\forall \alpha, \beta \in \N$, there exists $C_{\alpha, \beta}>0$ s.t.
 $$
 \abs{\partial_x^\alpha \triangle^\beta f(x, j)} \leq C_{\alpha, \beta} \, \la j \ra^{\rho - \beta}  \ .
 $$
 We will write $f \in S^\rho_\tor$.
 \end{definition}
   Again we endow $S^\rho_\tor$  with the family of seminorms
$$
\wp^\rho_j(f) := \sum_{\alpha + \beta \leq j}
\ \ \sup_{(x, j) \in \T \times  \Z} \la j \ra^{-\rho + \beta}
\left|\partial_x^\alpha \, \triangle_\xi^\beta f(x,j)\right| \ , \qquad j \in \N_0 \ . 
$$
If a symbol $f$ depends on additional parameters (e.g. it is time dependent), we ask that the constants $C_{\alpha, \beta}$ are uniform w.r.t. such parameters.

 To a symbol $f \in S^\rho_\tor$ we associate the operator $f(x, D_x)$ by standard quantization:
 $$
\Big( f(x, D_x) \psi\Big)(x) := \sum_{j \in \Z} f(x, j)\,  \psi_j \, e^{\im j x} \ .
 $$
 Then we have the following
 \begin{definition}
\label{pseudo.to}
  We say that $F\in \cA_\rho$ if it is a pseudodifferential operator
  with symbol of class $S^\rho_{\tor}$, i.e., if there
  exist $f \in S^\rho_{\tor}$ such that $F = f(x, D_x)$.
\end{definition}

\begin{remark} The operator $|D_x|$ has symbol given by $|j|$; therefore $|D_x| \in \cA_1$.
\end{remark}

As usual we give  $\cA_\rho$ a Fr\'echet structure by endowing it with the seminorms of the symbols.

In this case we have:
\begin{theorem}
Consider the half-wave equation \eqref{halfwave}. There exist  a pseudodifferential operator of order $0$, $V \in  C^\infty(\T, \cA_0)$, and for any $r \in \N$ a constant $C_r >0$ and a  solution $\psi(t)$ of \eqref{halfwave} such that, for any $t \geq 0$,  $\psi(t) \in H^r(\T)$ and satisfies $\norm{\psi(t)}_{H^r(\T)} \geq C_r \, t^{r}$ for $t$ large enough.
\end{theorem}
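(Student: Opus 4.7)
The plan is to apply Theorem~\ref{thm:main} with $K_0:=|D_x|+1$ (the additive constant makes $K_0$ strictly positive and changes solutions of \eqref{halfwave} only by the global phase $e^{-\im t}$, hence is harmless) and $A:=v(x)$, the operator of multiplication by a smooth real function $v\in C^\infty(\T,\R)$ with $v'\not\equiv 0$, e.g.\ $v(x)=\delta\cos x$ with $\delta\neq 0$ arbitrarily small. The scale $\cH^r$ associated with this $K_0$ coincides, with equivalent norms, with the standard $H^r(\T)$.

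Assumptions A and B are essentially immediate. Assumption A holds since ${\rm spec}(K_0)=\{n+1:n\in\N_0\}\subseteq\N+\lambda$; in particular $e^{\im 2\pi K_0}={\rm id}$. Assumption B is Example~\ref{ex} verbatim: choosing $\psi_0\in H^r(\T)$ with $(v')^r\psi_0\not\equiv 0$, Lemma~\ref{lem:ad.0} together with Remark~\ref{rem:ad.0} (with $K_0'=\partial_x$, so that $[v(x),\partial_x]=-v'(x)\neq 0$ while the next iterated commutator vanishes) gives $\|e^{-\im t v(x)}\psi_0\|_{H^r(\T)}\gtrsim t^r$ for $t\gg 1$.

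The core of the proof is Assumption C, namely that $V_A(t):=e^{-\im t K_0}v(x)e^{\im t K_0}$ lies in $C^\infty(\T,\cA_0)$. I would carry out a direct Fourier computation. Writing $v(x)=\sum_k\hat v_k e^{\im k x}$, one reads off the matrix elements $\langle V_A(t)e^{\im jx},e^{\im mx}\rangle=\hat v_{m-j}\,e^{\im t(|j|-|m|)}$, which translate into the toroidal symbol
\begin{equation*}
a(t,x,j)=\sum_{k\in\Z}\hat v_k\,e^{\im t(|j|-|k+j|)}\,e^{\im k x}.
\end{equation*}
The key observation is that for $|k|<|j|$ one has $|j|-|k+j|=-k\,\mathrm{sgn}(j)$, so this portion of the sum is a truncated Fourier series of $v(x-t\,\mathrm{sgn}(j))$. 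The complementary tail $|k|\geq|j|$ is controlled by the rapid decay of $\hat v_k$ and defines a smoothing symbol. Hence $a(t,x,j)=v(x-t\,\mathrm{sgn}(j))+r(t,x,j)$ with $r\in\bigcap_N S^{-N}_\tor$, the leading piece being piecewise constant in $j$ on each half-line $\pm\N$. All the seminorm estimates $|\partial_x^\alpha\triangle_j^\beta\partial_t^\ell a|\lesssim\langle j\rangle^{-\beta}$ then follow either from the smoothness of $v(x\mp t)$ (all $j$-differences vanish for $|j|\geq 2$) or from the rapid decay of $r$, while the finitely many transition values $j\in\{-1,0,1\}$ contribute only bounded corrections. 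Periodicity in $t$ follows from Remark~\ref{rem:flow.per}.

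With A, B, C verified, Theorem~\ref{thm:main} provides the perturbation $V_A\in C^\infty(\T,\cA_0)$ and a solution $\psi(t)\in\cH^r\simeq H^r(\T)$ of $\im\dot\psi=(K_0+V_A(t))\psi$ with $\|\psi(t)\|_{H^r(\T)}\geq C_r\,t^r$ for $t\gg 1$; absorbing the $e^{\im t}$ phase from the shift $|D_x|\mapsto |D_x|+1$ recovers a solution of \eqref{halfwave} with the same growth. The main technical obstacle I expect is precisely the symbol analysis in Assumption~C: one has to accommodate the discontinuity of the classical symbol $|\xi|$ at $\xi=0$, which on $\T$ is benign but prevents a direct quotation of the smooth Egorov theorem used for the harmonic oscillator and forces the explicit Fourier bookkeeping described above.
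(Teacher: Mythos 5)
Your proof is correct and follows the same overall structure as the paper: set $K_0=|D_x|+\lambda$, take $A=v(x)$ as in Example~\ref{ex}, verify Assumptions A, B, C, and apply Theorem~\ref{thm:main}. The bookkeeping of the additive constant is slightly different (the paper folds $\lambda$ into the perturbation as $V=\lambda(1+e^{-\im tK_0}Ae^{\im tK_0})$, while you keep $V=V_A$ and absorb the shift by the global phase $e^{\im t}$), but the two are equivalent.

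The one genuine difference is your verification of Assumption~C. The paper dispatches it in one line by invoking the Egorov theorem of H\"ormander and Duistermaat--Guillemin for classical pseudodifferential operators on closed manifolds, relying on the fact that on $\T$ the nonsmooth point of $|\xi|$ at $\xi=0$ is immaterial (the toroidal calculus uses the difference operator $\triangle$, for which $|j|$ is an honest symbol of order one). You instead carry out the conjugation explicitly in Fourier variables: the matrix elements $\hat v_{m-j}\,e^{\im t(|j|-|m|)}$ give the exact toroidal symbol $a(t,x,j)=\sum_k\hat v_k\,e^{\im t(|j|-|k+j|)}e^{\im kx}$, the identity $|j|-|k+j|=-k\,\mathrm{sgn}(j)$ for $|k|<|j|$ isolates the leading piece $v(x-t\,\mathrm{sgn}(j))$, and the rapid decay of $\hat v_k$ makes the complementary sum a smoothing remainder. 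This is a perfectly valid and, for this specific $K_0$ (a Fourier multiplier with integer, hence $2\pi$-periodic, spectrum), arguably cleaner route: it is entirely self-contained, sidesteps any discussion of the symbol $|\xi|$ near $\xi=0$, and makes manifest both the membership $V_A\in C^\infty_b(\T,\cA_0)$ and the $2\pi$-periodicity. What the paper's citation of Egorov buys is uniformity with the Zoll case, where an explicit diagonalization is unavailable and the general theorem is really needed; what your computation buys is transparency and rigor at no extra cost for the half-wave model.
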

\begin{proof}
First we show how to put ourselves in the setting of  the abstract problem. Define 
 $$
 K_0 := |D_x| + \lambda  \ , \qquad \lambda >0 \ . 
 $$
 The space $\cH^r :=\dom(K_0^r)$ coincides with  $H^r(\T)$, with equivalent norms.
 We take  perturbations of the form  $V(t, x, D_x) = \lambda\left( 1+ e^{-\im t K_0} A e^{\im t K_0}\right)$, so that  \eqref{halfwave}  becomes
 $$
 \im \dot \psi = K_0 \psi  + V_A(t, x, D_x) \psi \ , \qquad V_A(t, x, D_x) := \lambda\, e^{-\im t K_0} A e^{\im t K_0} \ , 
 $$
and we are back to the abstract setting. 
We need only to verity Assumptions A, B, C.
 
 {\sc Assumption A:} Trivial, since $\sigma(K_0) = \{ j +\lambda \}_{j \in \N_0}$.
 
 {\sc Assumption B:} Take $A$ and the initial datum $\psi_0$   as in Example \ref{ex}. 
 
 {\sc Assumption C:} One has  $A \in \cA_0$ and $K_0 \in \cA_1$. By a classical result of H\"ormander \cite{ho} (see also  \cite{dui}),   $t \mapsto e^{-\im t K_0} A e^{\im t K_0} \in C^\infty(\R, \cA_0)$; actually, being periodic in time, it belongs to $C^\infty(\T, \cA_0)$.
\end{proof}

\begin{remark}
The parameter $\lambda$ can be arbitrary small; therefore also in this case  $V(t,x, D_x)$ can be arbitrary small in size.
\end{remark}

\begin{remark}
By Calderon-Vaillancourt theorem,   $V(t,x, D_x) \in C^\infty(\T, \cL(\cH^r))$  $\, \forall r \in \N$.
\end{remark}
%An explicit computation shows that 
% $$
% V(t,x, D_x) =  1 + \cos(x-t)\Pi_+ + \cos(x+t)\Pi_-
% $$
% where $\Pi_+ \psi = \sum_{j \geq 0 } \psi_j e^{\im j x}$ is the projector on the nonnegative Fourier modes, and $\Pi_- := \uno - \Pi_+$.

\subsection{Schr\"odinger-Dirac equation on Zoll manifolds}
Consider the Schr\"odinger-Dirac equation on a Zoll manifold $M$ (e.g., $M$ can be a $n$-dimensional sphere)
\begin{equation}
\label{zoll}
\im \dot \psi = \sqrt{-\Delta_{g} + m^2 } \,  \psi + V(t,x, D_x) \psi \ , \qquad x \in M  \ ;
\end{equation}
here $m \neq 0$ is a real number  and $-\Delta_{g}$ is the positive  Laplace-Beltrami operator on  $M$. 
Let  $H^r(M) = \dom\left((1-\Delta_g)^{r/2}\right)$, $r\geq 0$, the usual scale of Sobolev spaces on $M$.  Finally we denote by
      $S_{\rm cl}^\rho$ the space of classical real valued symbols of
      order $\rho\in \R$ on the cotangent $T^*(M)$ of $M$ (see
      H\"ormander \cite{ho} for more details).
      \begin{definition}
        \label{pseudo}
We say that $F\in\cA_\rho$ if it is a pseudodifferential operator (in the sense of H\"ormander \cite{ho}) with
symbol of class $S^\rho_{\rm cl}$.
      \end{definition}

\begin{remark} The operator $\sqrt{-\Delta_{g} + m^2 }$ belongs to  $ \cA_1$ \cite{ho}.
\end{remark}      
      
\begin{remark}
By \cite{cdv}, there exist $ c_0, c_1 >0$ such that 
$$
\sigma\left(\sqrt{-\Delta_{g} + m^2 } \right) \subseteq \bigcup_{j \geq 0} \left[ j + c_0 - \frac{c_1}{j} , \ j + c_0 + \frac{c_1}{j} \right] \ , 
$$
 so in this case  the spectral gaps are only {\em asymptotically} constant. 
\end{remark}      
      
      We have the following 
     \begin{theorem}
      Consider the Schr\"odinger-Dirac  equation \eqref{zoll}. There exists a pseudodifferential operator of order $0$, $V \in  C^\infty(\T, \cA_0)$, and for any $r \in \N $ a constant $C_r >0$ and a  solution $\psi(t)$ of \eqref{zoll} fulfilling $\psi(t) \in H^r(M)$ for any time $t$, and 
\begin{equation*}
 \norm{\psi(t)}_{H^r(M)} \geq C_r \, t^{r} \ , \qquad t \gg 1 \ .
 \end{equation*}
\end{theorem}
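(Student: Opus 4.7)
The idea is to reduce to the abstract Theorem \ref{thm:main} by absorbing the non-constant correction to the spectral gaps into the perturbation $V(t,x,D_x)$. First I would appeal to the classical results of Weinstein and Colin de Verdi\`ere on Zoll manifolds (cf.\ \cite{cdv}): since $\sqrt{-\Delta_g + m^2}$ is an elliptic positive pseudodifferential operator of order $1$ whose principal symbol $|\xi|_g$ generates the periodic geodesic flow on $T^*M\setminus 0$, there exists a self-adjoint $Q \in \cA_{-1}$ such that
\[
K_0 := \sqrt{-\Delta_g + m^2} + Q
\]
has spectrum contained in $\lambda + \N_0$ for some $\lambda > 0$. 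Since $Q \in \cA_{-1}\subset \cA_0$, the operator $K_0$ remains a positive, elliptic first-order pseudodifferential operator with the same principal symbol as $\sqrt{-\Delta_g+m^2}$, so $\dom(K_0^r) = H^r(M)$ with equivalent norms. This spectral normalization is the main obstacle of the proof: it is precisely the ``minor modification'' alluded to in the introduction, and it is what the Zoll hypothesis buys us.

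Next, as time-independent generator of unbounded orbits I take $A := v(x)$, multiplication by some real, non-constant $v \in C^\infty(M,\R)$; then $A \in \cA_0 \cap \cL(H^r(M))$ for every $r \in \N$. Since $(e^{-\im t A}\psi_0)(x) = e^{-\im t v(x)}\psi_0(x)$, a Leibniz expansion of $\nabla_g^r(e^{-\im t v}\psi_0)$ isolates a leading $t$-contribution of the form $(-\im t)^r (\nabla_g v)^{\otimes r} e^{-\im t v}\psi_0$; one then extracts, for any $\psi_0 \in C^\infty(M)$ with $(\nabla_g v)\,\psi_0 \not\equiv 0$ and every $r \in \N$, the lower bound
\[
\norm{e^{-\im t A}\psi_0}_{H^r(M)} \geq C_{r,\psi_0}\, t^r , \qquad t \gg 1 ,
\]
verifying Assumption B with $\mu = 1$, in the spirit of Example \ref{ex}.

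Assumption A is built into the construction of $K_0$, and for Assumption C I invoke H\"ormander's Egorov theorem \cite{ho}: since $K_0$ shares its principal symbol with $\sqrt{-\Delta_g+m^2}$, its Hamilton flow on $T^*M\setminus 0$ is $2\pi$-periodic, so $t\mapsto e^{-\im tK_0}A\, e^{\im tK_0}$ is a smooth curve in $\cA_0$, upgraded by Remark \ref{rem:flow.per} to $C^\infty(\T, \cA_0)$. Finally I set $V_A(t) := e^{-\im tK_0} A\, e^{\im tK_0}$ and define the perturbation of \eqref{zoll} to be
\[
V(t,x,D_x) := Q + V_A(t) \in C^\infty(\T, \cA_0) ,
\]
so that $\sqrt{-\Delta_g + m^2} + V(t) = K_0 + V_A(t)$. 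Theorem \ref{thm:main} then applies and produces the desired solution with $\norm{\psi(t)}_{H^r(M)} \gtrsim t^r$.
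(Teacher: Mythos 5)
Your proposal is correct and follows essentially the same route as the paper: normalize the spectral gaps via Colin de Verdi\`ere's $Q\in\cA_{-1}$, set $K_0 = \sqrt{-\Delta_g+m^2}+Q$, take $A = v(x)$ as the time-independent generator, invoke H\"ormander's Egorov theorem for Assumption C, and absorb $Q$ into the perturbation $V = Q + V_A(t)$. The only cosmetic difference is in verifying Assumption B, where you expand $\nabla_g^r(e^{-\im t v}\psi_0)$ directly rather than invoking the paper's Lemma \ref{lem:ad.0} with $K_0'=\nabla_g$ and the vanishing of $[[\nabla_g,v],v]$; the two arguments are equivalent.
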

\begin{proof}
    To begin with we show how to put ourselves in the abstract setup. So first we  define the operator $K_0$. This is achieved by exploiting the spectral properties of the operator $-\Delta_g$.     
     Applying  Theorem 1  of Colin de Verdi\`ere \cite{cdv},  there  exists a pseudodifferential operator $Q$  of order $-1$, commuting with $-\Delta_g$,
       such that ${\rm Spec}[\sqrt{-\Delta_g + m^2} + Q]\subseteq \N+\lambda$  with some $\lambda>0$.
        So we define 
\begin{equation}
\label{zoll.k0}
K_0 := \sqrt{-\Delta_g + m^2} + Q  \in \cA_1 \ .
\end{equation}   
Since $Q \in \cA_{-1}$, the space  $\cH^r := {\rm Dom } (K_0^r)$, $r \geq 0$,  coincides with the classical Sobolev space $H^r(M)$ and one has the equivalence of norms
$$
c_r \, \norm{\psi}_{H^r(M)} \leq \norm{\psi}_r \leq \wt c_r \, \norm{\psi}_{H^r(M)} \ , \qquad \forall r \in \R \ . 
$$
We take the perturbation of the form 
$V(t,x, D_x) = Q + e^{-\im t K_0} A e^{\im t K_0}$, 
so that \eqref{zoll} becomes  
$$
\im \dot \psi = K_0 \psi + V_A(t, x, D_x) \ , \qquad V_A(t, x, D_x) =  e^{-\im t K_0} A e^{\im t K_0}
$$
and we are back to the abstract setting.
We need only to verify Assumptions A, B, C.

{\sc Assumption A:} True by construction.  

{\sc Assumption B:}  It follows by a trivial variant of Example \ref{ex}.
Choose any   nonconstant $v(x)\in C^\infty(M, \R)$, define $A$ as the multiplication operator by $v(x)$, 
and take an initial datum $\psi_0 \in \cH^r$ fulfilling $(\nabla_g v(x)) \psi_0 \not\equiv 0$. 
For example,  $v(x)$ can be  any  noncostant  eigenfunction  of $-\Delta_g$.
Then the Schr\"odinger equation \eqref{eq:A} has orbits fulfilling \eqref{growth.os} with $\mu = 1$
(it is enough to apply Lemma \ref{lem:ad.0} and  Remark \ref{rem:ad.0} using   $[\nabla_g, v(x)] \neq 0$, $[[\nabla_g, v(x)], v(x)] = 0$).

{\sc Assumption C:} One has  $A \in \cA_0$ and $K_0 \in \cA_1$. Then  $e^{-\im t K_0} A e^{\im t K_0} \in C^\infty(\R, \cA_0)$ by a classical result of H\"ormander \cite{ho}.
\end{proof}
 
\begin{remark}
In this case, the perturbation $V(t, x, D_x)$ cannot be chosen arbitrary small in size, since we have to  add the smoothing operator $Q$ to 
correct the spectral gaps.
\end{remark} 
 
\begin{remark}
One could also choose $V(t, x, D_x)$ as $ e^{-\im t \sqrt{- \Delta_g + m^2}} \, \epsilon   A \, e^{\im t \sqrt{-\Delta_g + m^2}}$; in such a way the perturbation is arbitrary small in size and  fulfills Assumption {\rm C} (again by \cite{ho}), but it is not periodic in time.
\end{remark}

 \appendix
 
 \section{Proof of Lemma \ref{lem:ad.0}}
 \label{app:A}

 Since the linear operator $\ad_A$ fulfills Leibniz rule,   for any $M, r \in \N$ one has the identity
 \begin{equation}
 \label{ad.ind}
 \ad_A^M(K_0^{2r}) = \sum_{k_1 + \ldots + k_{2r} = M} \binom{M}{k_1  \cdots k_{2r}} \, \ad_A^{k_1}(K_0) \, \ad_A^{k_2}(K_0)  \cdots \ad_A^{k_{2r}}(K_0) 
 \end{equation}
If $M \geq 2rN +1$ then in 
 \eqref{ad.ind}  at least one index $k_j$ is greater equal $N+1$, so by assumption \eqref{ad.0} the whole expression is zero.
By the same argument,  if $M = 2rN$ then the only term not null in \eqref{ad.ind} is the one with $k_j = N$ $\, \forall j$, which is  
$\left[ \ad_A^{N}(K_0) \right]^{2r}$.

Consider now the solution  $\psi(t)\equiv e^{- \im t A} \psi_0$  of equation \eqref{eq:A}.  
Since $A$ is  self-adjoint,  
 $$
 \norm{\psi(t)}_r^2 \equiv \la e^{\im t A} \, K_0^{2r} \, e^{- \im t A} \psi_0, \psi_0 \ra \ ,
 $$
where   we used $\norm{\psi}_r^2 \equiv \la K_0^{2r} \psi, \psi \ra$.
Now we use the Lie formula $ e^{\im t A} \, B \, e^{- \im t A}\equiv \sum_{j \geq 0} \frac{(\im t)^j}{j!} \ad_A^j (B) $, assumption \eqref{ad.0} and our previous considerations to obtain
 $$
 e^{\im t A} \, K_0^{2r} \, e^{- \im t A} = \sum_{M=0}^{2rN}
 \frac{(\im t)^M}{M !} \, \ad_{A}^M(K_0^{2r}) \equiv \frac{(\im t)^{2rN}}{(2rN)!} \left[\ad_A^{N}(K_0) \right]^{2r} + O(t^{2rN-1}) \ ;
 $$
provided $\left[\ad_A^{N}(K_0)\right]^{r} \psi_0 \neq 0$,  it follows that 
 \begin{equation}
 \label{}
 \liminf_{t \to +\infty} \frac{\norm{\psi(t)}_r^2}{|t^{2rN}|} \geq 
 \frac{1}{(2rN)!}\ \abs{ \la \left[\ad_A^{N}(K_0) \right]^{2r} \psi_0, \psi_0 \ra} > 0  \ . 
 \end{equation}
In particular there exists a constant $C(r, N, \psi_0) >0$ such that  
 \eqref{ad.0.est} holds true.

\bibliographystyle{alpha} 
\newcommand{\etalchar}[1]{$^{#1}$}
\def\cprime{$'$}

\end{document}